\newtheorem{theorem}{Theorem}
\newtheorem{algorithm}{Algorithm}
\newtheorem{corollary}{Corollary}
\title{\LARGE \bf A Block-Alternating Iterative Approach \\ 
for a Class of Non-Convex Optimization Problems}
\author{Anran Li, John P. Swensen, and Mehdi Hosseinzadeh,~\IEEEmembership{Senior Member,~IEEE}
\thanks{This research was supported by the National Science Foundation under award number CNS-2502856.}
\thanks{The authors are with the School of Mechanical and Materials Engineering, Washington State University, Pullman, WA 99164, USA (email: anran.li@wsu.edu; john.swensen@wsu.edu; mehdi.hosseinzadeh@wsu.edu).}
}
\begin{document}

\maketitle
\thispagestyle{empty}
\pagestyle{empty}

\begin{abstract}
Constrained non-convex optimization problems frequently arise in control applications. Solving such problems is inherently challenging, as existing methods often converge to suboptimal local minima or incur prohibitive computational costs. To address this challenge, this paper proposes a novel block-alternating iterative method that decomposes the original problem into variable-specific subproblems, which are solved iteratively. Under the assumption that the problem is convex with respect to each decision variable, the proposed approach reformulates the original problem into a sequence of convex subproblems. Theoretical results are established regarding the convergence and optimality of the method. In addition, a numerical example and a real-world control engineering application are presented to demonstrate its effectiveness. Finally, this paper introduces a ready-to-use Python platform that implements the proposed method, together with existing algorithms, to facilitate comparison and adoption.

\end{abstract}

\section{Introduction}
Consider the following optimization problem:
\begin{equation}\label{MainProblem1}
x_i^\ast,~\forall i=\left\{
\begin{array}{ll}
     & \arg\;\min f(x_1,\cdots,x_n) \\
    \text{s.t.} & g_j(x_1,\cdots,x_n)\leq0,~j=1,\cdots,c
\end{array}
\right.,
\end{equation}
where $c$ denotes the number of constraints. Here, the function $f(\cdot)$ and the set $\{x_1,\cdots,x_n:\; g_j(x_1,\cdots,x_n)\leq0,~\forall j\}$ are non-convex with respect to the joint variable $(x_1,\cdots,x_n)$, but convex with respect to each individual variable $x_i,~\forall i$. Assume that the optimization problem \eqref{MainProblem1} admits a unique global optimum and the feasible set $\{x_1,\cdots,x_n:g_j(x_1,\cdots,x_n)\leq0,~\forall j\}$ is connected.


Problem \eqref{MainProblem1} appears in a variety of applications, ranging from signal processing and communications \cite{zeng2020coordinate} to control and robotics \cite{shah2021rapid,parra2021rotation}, as well as in learning \cite{yuan2023coordinate}. In particular, the one-step-ahead predictive control framework presented in our prior work \cite{Our-one-step-ahead,Li2025ACC} determines the control input by solving an optimization problem of the form \eqref{MainProblem1}.

In many applications, the optimization problem \eqref{MainProblem1} is highly complex and often exceeds the capabilities of conventional gradient-based solvers \cite{nocedal2014interior,kungurtsev2014sequential,Boyd2004}. Metaheuristic algorithms, such as genetic algorithms and particle swarm optimization, are generally impractical due to their high computational cost \cite{alhijawi2024genetic,shami2023velocity} and their tendency to converge to local, rather than global, optima \cite{lai2019adaptive,diamond2018general,xu2020second}. Neural optimization machines \cite{Chen2024} can be applied in specific cases; however, they remain computationally intensive and challenging to implement, particularly for medium- to large-scale problems.

A promising alternative is provided by alternating methods, including coordinate descent \cite{CD-Original-SJ,gu2020coordinate,rodomanov2020randomized} and the alternating-direction method of multipliers \cite{glowinski2014alternating}. These methods exploit problem structure by optimizing over one subset of variables (or `block') at a time while keeping the others fixed. In doing so, they convert the original non-convex problem into a sequence of simpler subproblems that are often convex and can be efficiently solved using standard gradient-based methods. However, alternating optimization methods are typically most effective for unconstrained problems, since optimizing a single coordinate can easily violate constraints in the general case. To address this limitation, projected coordinate descent and related techniques \cite{shi2016primer} can incorporate simple constraints; however, their applicability is generally restricted to specific problem classes.

This paper addresses the aforementioned challenges by developing a novel alternating optimization method applicable to constrained problems of the form \eqref{MainProblem1}. The convergence properties of the proposed method are rigorously established. Its effectiveness is demonstrated through a representative numerical optimization problem. Moreover, the method has been integrated into the one-step-ahead predictive control framework described in \cite{Our-one-step-ahead} for temperature regulation. To facilitate broader adoption and enable other researchers to easily apply the method, a Python platform implementing the algorithm has been developed and shared; see \cite{Optimization-GUI}.

The rest of the paper is organized as follows. Section \ref{sec:Block-Alternating Iterative Opt} introduces the proposed block-alternating iterative optimization method and establishes its theoretical properties. Section \ref{sec:Experimental Study} presents a numerical example and a control engineering application to demonstrate the effectiveness of the method. Section \ref{sec:Python Platform} describes the developed Python platform. Finally, Section \ref{sec:Conclusion} concludes the paper.

\section{Block-Alternating Iterative Optimization}\label{sec:Block-Alternating Iterative Opt}


This subsection presents the proposed method for solving optimization problems of the form \eqref{MainProblem1}.

At the initial iteration, we consider the vector $(x_1^{(0)},x_2^{(0)},x_3^{(0)},\cdots,x_n^{(0)})$ as the initial guess for the optimal solution of the optimization problem \eqref{MainProblem1}. This initial point can be generated randomly from the feasible set, defined as $\{x_1,\cdots,x_n:\; g_j(x_1,\cdots,x_n)\leq0,~\forall j\}$; see Subsection \ref{sec:Sampling} for more details.

In the block-alternating iterative optimization, each iteration optimizes a single component of the decision vector while keeping the remaining components fixed. Specifically, in the first stage, we solve the subproblem
\begin{equation}
\tilde{x}_{1,1}^\ast=\left\{
\begin{array}{ll}
     & \arg\;\min\limits_{x_1} f(x_1,x_2^{(0)},x_3^{(0)},\cdots,x_n^{(0)}) \\
    \text{s.t.} & g_j(x_1,x_2^{(0)},x_3^{(0)},\cdots,x_n^{(0)})\leq0,~\forall j
\end{array}
\right.,
\end{equation}
and update the solution as $(x_1^{(1)},x_2^{(1)},x_3^{(1)},\cdots,x_n^{(1)}):=(\tilde{x}_{1,1}^\ast, x_2^{(0)},x_3^{(0)},\cdots,x_n^{(0)})$. Next, we optimize with respect to the second decision variable, yielding
\begin{equation}
\tilde{x}_{2,2}^\ast=\left\{
\begin{array}{ll}
     & \arg\;\min\limits_{x_2} f(x_1^{(1)},x_2,x_3^{(1)},\cdots,x_n^{(1)}) \\
    \text{s.t.} & g_j(x_1^{(1)},x_2,x_3^{(1)},\cdots,x_n^{(1)})\leq0,~\forall j
\end{array}
\right.,
\end{equation}
and update accordingly as $(x_1^{(2)},x_2^{(2)},x_3^{(2)},\cdots,x_n^{(2)}):=(x_1^{(1)}, \tilde{x}_{2,2}^\ast,x_3^{(1)},\cdots,x_n^{(1)})$. 

This process is repeated sequentially over all decision variables, i.e., from $x_1$ through $x_n$, after which the cycle is restarted. Specifically, at iteration $k$ and for the $i$-th decision variable, the following subproblem is solved:
\begin{equation}\label{eq:Iterationk}
\tilde{x}_{i,k}^\ast=\left\{
\begin{array}{ll}
     & \arg\;\min\limits_{x_i} f(x_1^{(k-1)},\cdots,x_{i-1}^{(k-1)},x_i,\\
     & ~~~~~~~~~~x_{i+1}^{(k-1)},\cdots,x_n^{(k-1)}) \\
    \text{s.t.} & g_j(x_1^{(k-1)},\cdots,x_{i-1}^{(k-1)},x_i,\\
     & ~~~~~~~~~~x_{i+1}^{(k-1)},\cdots,x_n^{(k-1)})\leq0,~\forall j
\end{array}
\right.,
\end{equation}
and once $\tilde{x}_{i,k}^\ast$ is obtained, the solution vector is updated as $(x_1^{(k)},\cdots,x_{i-1}^{(k)},x_i^{(k)},x_{i+1}^{(k)},\cdots,x_n^{(k)})
:= (x_1^{(k-1)},\cdots,x_{i-1}^{(k-1)},\tilde{x}_{i,k}^\ast,x_{i+1}^{(k-1)},\cdots,x_n^{(k-1)})$.

The iterative scheme continues until convergence, typically determined when successive iterates remain unchanged or satisfy a predefined stopping criterion. Algorithm \ref{alg:algorithm} provides the pseudocode of the proposed optimization method, where $rem(k,n)$ denotes the remainder of the division $k/n$.



\begin{algorithm}[!t]
    \caption{Block-Alternating Iterative Optimization}
    \label{alg:algorithm}
    \begin{algorithmic}[1]
        \State \textbf{Initialize:} Select a feasible initial point $(x_1^{(0)}, x_2^{(0)},x_3^{(0)},\cdots,\,x_n^{(0)})$ 
        \For{$k=1,2,\cdots$} 
            \State  $i\leftarrow rem(k,n)$
                \If {$i=0$} 
                    \State $i\leftarrow n$
                \EndIf
            \State $\tilde{x}_{i,k}^\ast \leftarrow$ Solve the Optimization Problem \eqref{eq:Iterationk}
            \State {\scriptsize $(x_1^{(k)},\cdots,x_n^{(k)}) \leftarrow(x_1^{(k-1)},\cdots,\tilde{x}_{i,k}^\ast,\cdots,x_n^{(k-1)})$}           
        \If {stopping criterion is satisfied}
        \State \Return $(x_1^{(k)},\cdots,x_n^{(k)})$
        \EndIf
        \EndFor
    \end{algorithmic}   
\end{algorithm}

\subsection{Theoretical Properties}\label{sec:Theoretical-Preperties}
This subsection studies the theoretical properties of the proposed method. We begin with the following corollary.

\begin{corollary}\label{cor:corollary1}
The iterative structure of the method implies that, if the initial point $(x_1^{(0)},\cdots,x_n^{(0)})$ is feasible, then the updated solution $(x_1^{(k)},\cdots,x_n^{(k)})$ remains feasible for all $k \geq 0$. Moreover, the cost function $f(\cdot)$ satisfies the inequality $f(x_1^{(k)}, \dots, x_n^{(k)}) \leq 
f(x_1^{(k-1)}, \dots, x_n^{(k-1)})$ for all $k\geq 0$,
indicating that the method generates a non-increasing sequence of cost values.
\end{corollary}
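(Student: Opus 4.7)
The plan is to prove both assertions by a straightforward induction on $k$, exploiting in each step the defining optimality of the subproblem \eqref{eq:Iterationk}. I would first treat feasibility. The base case $k=0$ holds by hypothesis. For the inductive step at iteration $k$, with block index $i$, I would observe that by the induction hypothesis $(x_1^{(k-1)},\cdots,x_n^{(k-1)})$ is feasible, so in particular the choice $x_i=x_i^{(k-1)}$ is a feasible candidate for \eqref{eq:Iterationk}; hence the subproblem is nonempty, and its optimizer $\tilde{x}_{i,k}^\ast$ satisfies $g_j(x_1^{(k-1)},\cdots,\tilde{x}_{i,k}^\ast,\cdots,x_n^{(k-1)})\le 0$ for every $j$. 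Because the update changes only the $i$-th coordinate, the new iterate $(x_1^{(k)},\cdots,x_n^{(k)})$ lies in the feasible set, completing the induction.

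For the non-increasing property, I would again invoke the optimality of $\tilde{x}_{i,k}^\ast$ in \eqref{eq:Iterationk}. Since $x_i^{(k-1)}$ is itself feasible for this subproblem, the minimum value cannot exceed the value attained at $x_i^{(k-1)}$, giving
\[
f(x_1^{(k-1)},\cdots,\tilde{x}_{i,k}^\ast,\cdots,x_n^{(k-1)}) \;\le\; f(x_1^{(k-1)},\cdots,x_i^{(k-1)},\cdots,x_n^{(k-1)}).
\]
By the update rule the left-hand side equals $f(x_1^{(k)},\cdots,x_n^{(k)})$ and the right-hand side equals $f(x_1^{(k-1)},\cdots,x_n^{(k-1)})$, which yields the claimed monotonicity.

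The argument is essentially bookkeeping, and the only delicate point is making sure the subproblem \eqref{eq:Iterationk} is well-posed at every iteration: it must be both feasible and admit a minimizer. Feasibility is exactly what the inductive hypothesis buys us (the previous iterate supplies a feasible point along the $i$-th coordinate slice), while existence of the minimum follows from the standing per-coordinate convexity of $f$ and of the slice $\{x_i:g_j(x_1^{(k-1)},\cdots,x_i,\cdots,x_n^{(k-1)})\le 0,\forall j\}$ together with the regularity implicit in the problem statement. Once these are noted, the corollary follows immediately and the two assertions can in fact be proved in a single joint induction, since monotonicity of $f$ is only needed after feasibility of the current iterate has been confirmed.
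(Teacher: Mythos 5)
Your induction is correct and is exactly the reasoning the paper leaves implicit when it asserts that ``the iterative structure of the method implies'' the corollary: the previous iterate supplies a feasible candidate for the one-variable subproblem \eqref{eq:Iterationk}, which gives both preservation of feasibility and the non-increase of $f$ by optimality of $\tilde{x}_{i,k}^\ast$. No substantive difference from the paper's (unstated) argument; your added remark on well-posedness of each subproblem is a sensible bit of extra care.
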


The next theorem investigates the asymptotic convergence properties of the proposed optimization method.

\begin{theorem}\label{theorem:optimal}
Consider the optimization problem in \eqref{MainProblem1}, subject to the conditions specified thereafter. Suppose that the block-alternating iterative optimization method starts from a feasible initial point $(x_1^{(0)},\cdots,\,x_n^{(0)})$. Then, if cost function $f(\cdot)$ is strongly convex with respect to each individual variable $x_i$ when all other variables are fixed, the tuple $(x_1^{(k)},\cdots,x_n^{(k)})$ generated by the method converges to the optimal solution $(x_1^{\ast},\cdots,x_n^{\ast})$ as $k\rightarrow\infty$.
\end{theorem}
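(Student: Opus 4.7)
The plan is to combine the monotone descent provided by Corollary~\ref{cor:corollary1} with the coordinate-wise strong convexity hypothesis to establish that the successive iterates settle at a coordinate-wise minimum, and then to leverage the uniqueness and connectedness assumptions on \eqref{MainProblem1} to identify that limit with the global optimum. First, I would invoke Corollary~\ref{cor:corollary1} to conclude that $\{f(x_1^{(k)},\ldots,x_n^{(k)})\}$ is non-increasing and that every iterate is feasible. Since $f$ is bounded below on the feasible set by $f(x_1^\ast,\ldots,x_n^\ast)$, this sequence converges to some finite limit $f_\infty \geq f(x_1^\ast,\ldots,x_n^\ast)$.

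Next, I would quantify the per-iteration decrease. Because the cost is strongly convex in $x_i$ with some modulus $\mu_i>0$ when the other variables are frozen, and because $\tilde{x}_{i,k}^\ast$ is the (unique) minimizer of the convex subproblem \eqref{eq:Iterationk}, the standard consequence of strong convexity at a minimizer yields
\begin{equation}
f(x_1^{(k-1)},\ldots,x_n^{(k-1)}) - f(x_1^{(k)},\ldots,x_n^{(k)}) \;\geq\; \tfrac{\mu_i}{2}\,\lVert \tilde{x}_{i,k}^\ast - x_i^{(k-1)} \rVert^2.
\end{equation}
Summing over $k$ telescopes the left-hand side to the finite quantity $f(x^{(0)}) - f_\infty$, so $\sum_{k\geq 1}\lVert \tilde{x}_{i,k}^\ast - x_i^{(k-1)} \rVert^2 < \infty$, and in particular the successive differences vanish.

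I would then argue that the sublevel set $\{x : f(x) \leq f(x^{(0)})\}$ intersected with the feasible set is bounded (a consequence of coordinate-wise strong convexity and boundedness below), so the iterates admit a convergent subsequence $(x_1^{(k_l)},\ldots,x_n^{(k_l)}) \to (\bar{x}_1,\ldots,\bar{x}_n)$. Continuity of $f$ and the $g_j$, together with the vanishing of successive differences established above, implies that $\bar{x}$ is a fixed point of the cyclic coordinate update: for every index $i$, the component $\bar{x}_i$ is the unique minimizer of the $i$-th coordinate subproblem through $\bar{x}$.

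The main obstacle is the last step, which promotes this coordinate-wise minimum to the global optimum; this is where the two so-far-unused hypotheses enter, namely uniqueness of the global optimum of \eqref{MainProblem1} and connectedness of the feasible set. The plan is to argue by contradiction: if $\bar{x} \neq (x_1^\ast,\ldots,x_n^\ast)$, then $f(\bar{x}) > f(x^\ast)$, and connectedness furnishes a continuous path in the feasible set joining $\bar{x}$ to $x^\ast$. Along such a path $f$ must strictly decrease somewhere near $\bar{x}$, but the coordinate-wise strong convexity combined with the fixed-point property forces $f$ to be locally non-decreasing along every coordinate direction at $\bar{x}$; a careful coordinate-by-coordinate comparison then shows that any descent direction issuing from $\bar{x}$ would yield a second distinct candidate for the global optimum, contradicting uniqueness. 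Thus $\bar{x} = x^\ast$, and since every convergent subsequence must share this limit, the entire sequence $(x_1^{(k)},\ldots,x_n^{(k)})$ converges to $(x_1^\ast,\ldots,x_n^\ast)$ as $k\to\infty$.
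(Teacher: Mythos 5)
Your opening steps are sound and in fact tighter than the paper's: the sufficient-decrease bound $f(x^{(k-1)})-f(x^{(k)})\geq\tfrac{\mu_i}{2}\lVert\tilde{x}_{i,k}^\ast-x_i^{(k-1)}\rVert^2$ plus telescoping gives vanishing successive differences directly, whereas the paper reaches convergence of the iterates through a separate uniqueness argument in Appendix \ref{Appendix:Uniqueness}. (One caveat there: coordinate-wise strong convexity plus boundedness below does \emph{not} imply bounded joint sublevel sets --- $f(x_1,x_2)=(x_1-x_2)^2+x_1^2\cdot 0$ is not a counterexample, but $f(x_1,x_2)=(x_1-x_2)^2$ is strongly convex in each coordinate separately after adding any coordinate-wise quadratic and still can have unbounded sublevel sets along the diagonal --- so the compactness you need for the convergent subsequence has to be assumed or derived some other way.)

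The genuine gap is the final step, which you yourself flag as the main obstacle but do not actually close. Two specific claims fail. First, "along such a path $f$ must strictly decrease somewhere near $\bar{x}$" is not true: a continuous feasible path from $\bar{x}$ to $x^\ast$ with $f(\bar{x})>f(x^\ast)$ may ascend before it descends, so connectedness gives you nothing local at $\bar{x}$. Second, "any descent direction issuing from $\bar{x}$ would yield a second distinct candidate for the global optimum, contradicting uniqueness" is a non sequitur: a descent direction at $\bar{x}$ merely shows $\bar{x}$ is not a local minimizer; following it leads toward the \emph{same} unique optimum $x^\ast$, so no second optimum is produced and uniqueness is never violated. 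The real obstruction is saddle points: a function such as $x_1^2+x_2^2-3x_1x_2$ (plus a regularizing quartic) is strongly convex in each coordinate yet has a coordinate-wise minimizer at the origin that is not a minimum --- precisely the class of examples the paper itself wrestles with in Appendix \ref{Appendix:ChatGPT}. Uniqueness of the global optimum does not rule out such spurious coordinate-wise fixed points, so your contradiction never materializes. The paper takes a different route here: it argues at $x^\ast$ rather than at the limit point, using coordinate-wise convexity to bound each $[\nabla f(\mathbf{x}^\ast)]_i(x_i^\ast-x_i^\dag)$ by the strictly negative gap $\Delta f_i=f(\mathbf{x}^\ast)-f(x_1^\ast,\ldots,x_i^\dag,\ldots,x_n^\ast)$, extracting one coordinate $j$ with a strictly negative term, and then perturbing $x_j^\ast$ by $\epsilon(x_j^\ast-x_j^\dag)$ to contradict the optimality of $x^\ast$ via a Taylor expansion. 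Whatever one thinks of that argument's own hypotheses (it requires feasibility of the swapped and perturbed points), it is a concrete mechanism; your sketch, as written, is not, and the theorem's conclusion does not follow from it.
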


\begin{proof}
Corollary \ref{cor:corollary1} established that the cost function $f(\cdot)$ is non-increasing along the iterations. Let $\underline{J}$ denote its limiting value as $k\rightarrow\infty$, i.e.,
\begin{align}\label{eq:Converge1}
\lim_{k\rightarrow\infty}{f(x_1^{(k)}, \dots,\tilde{x}_{i,k}^\ast,\cdots,x_n^{(k)})}&=\underline{J},
\end{align}
where $\tilde{x}_{i,k}^\ast$ is defined in \eqref{eq:Iterationk}. Under the assumption that the cost function $f(\cdot)$ is strongly convex with respect to each individual variable $x_i$ and given that the feasible set is convex (see the discussion after Equation \eqref{MainProblem1}), the minimizer $\tilde{x}_{i,k}^\ast$ is unique. Consequently, it follows from \eqref{eq:Converge1} that there exists a unique tuple $(x_1^{\dag},\cdots,x_n^{\dag})$ such that $(x_1^{(k)},\cdots,x_n^{(k)})\rightarrow\big(x_1^{\dag},\cdots,x_n^{\dag}\big)$ as $k\rightarrow\infty$ (see Appendices \ref{Appendix:Uniqueness} and \ref{Appendix:ChatGPT}). Thus, to complete the proof, it is sufficient to show that $(x_1^{\ast},\cdots,x_n^{\ast})=(x_1^{\dag},\cdots,x_n^{\dag})$, which we will demonstrate by contradiction in the following.

Suppose that $(x_1^{\ast},\cdots,x_n^{\ast})\neq(x_1^{\dag},\cdots,x_n^{\dag})$. On one hand, we have\footnote{For brevity, we use $f(\mathbf{x}^\ast)$ to denote $f(x_1^{\ast},\cdots,x_n^{\ast})$.}:
\begin{align}\label{eq:theorem3-optimal1_n}
\big(\nabla f(\mathbf{x}^\ast)\big)^\top \begin{bmatrix}
x_1^\ast-x_1^\dag\\
\vdots\\
x_n^\ast-x_n^\dag
\end{bmatrix}=\sum_{i=1}^n[\nabla f(\mathbf{x}^\ast)]_i(x_i^\ast-x_i^\dag),
\end{align}
where $[\nabla f(\cdot)]_i$ indicates the $i$th element of $\nabla f(\cdot)$.

On the other hand, from the convexity of the cost function $f(\cdot)$ with respect to $x_i$, it follows that:
\begin{align}\label{eq:theorem3-optimal1}
&\big(\nabla f(\mathbf{x}^\ast)\big)^\top \begin{bmatrix}
x_1^\ast-x_1^\ast\\
\vdots\\
x_i^\ast-x_i^\dag\\
\vdots\\
x_n^\ast-x_n^\ast
\end{bmatrix}=[\nabla f(\mathbf{x}^\ast)]_i(x_i^\ast-x_i^\dag)\leq\Delta f_i,
\end{align}
where $\Delta f_i:=f(x_1^{\ast},\cdots,x_i^\ast,\cdots,x_n^{\ast})-f(x_1^{\ast},\cdots,x_i^\dag,\cdots,x_n^{\ast})$ and assuming that $(x_1^{\ast},\cdots,x_i^\dag,\cdots,x_n^{\ast})$ is a feasible point, the optimality of $(x_1^{\ast},\cdots,x_i^\ast,\cdots,x_n^{\ast})$ implies that $\Delta f_i<0$.

By combining \eqref{eq:theorem3-optimal1_n} and \eqref{eq:theorem3-optimal1}, we obtain:
\begin{align}\label{eq:Optimalitynew1}
\sum_{i=1}^n[\nabla f(\mathbf{x}^\ast)]_i(x_i^\ast-x_i^\dag)\leq \Delta f,
\end{align}
where $\Delta f:=\sum_{i=1}^n\Delta f_i$, which is strictly negative. Thus, if $(x_1^{\ast},\cdots,x_n^{\ast})\neq(x_1^{\dag},\cdots,x_n^{\dag})$, it follows from \eqref{eq:Optimalitynew1} that there exists at least one index $j\in\{1,\cdots,n\}$ such that\footnote{Suppose that $\sum_{i=1}^nz_i\leq \psi$, where $z_i\in\mathbb{R}$. Assume $z_i>\psi/n$ for all $i$. Adding these inequalities gives $\sum_{i=1}^nz_i>\psi$, which is a contradiction. Thus, it is impossible for all $z_i>\psi/n$ to hold simultaneously. Hence, there exists at least one index $j$ such that $z_j\leq\psi/n$.}:
\begin{align}\label{eq:Optimalitynew2}
[\nabla f(\mathbf{x}^\ast)]_j (x_j^\ast - x_j^\dag) \leq \frac{\Delta f}{n}.
\end{align}





From the Taylor expansion of the cost function $f(\cdot)$, we have:
\begin{align}\label{theorem3-opt2}
&f\left(x_1^\ast, \cdots, x_j^\ast+\epsilon(x_j^\ast -x_j^\dag),\cdots,x_n^\ast\right) = f(\mathbf{x}^\ast)\nonumber \\
&+\epsilon[\nabla f(\mathbf{x}^\ast)]_j(x_j^*-x_j^\dag) + \beta,
\end{align}
for some $\epsilon>0$, where $\beta$ accounts for the higher-order terms in the Taylor expansion, i.e., terms involving higher powers of $\epsilon$ multiplied by the corresponding higher-order derivatives of $f(\cdot)$. According to \eqref{eq:Optimalitynew2}, it follows from \eqref{theorem3-opt2} that:
\begin{align}\label{theorem3-opt3}
&f\left(x_1^\ast, \cdots, x_j^\ast+\epsilon(x_j^\ast -x_j^\dag),\cdots,x_n^\ast\right) \leq f(\mathbf{x}^\ast)\nonumber\\
&+\epsilon\frac{\Delta f}{n}+ \beta.
\end{align}

Conversely, from the optimality of the solution $(x_1^{\ast},\cdots,x_j^\ast,\cdots,x_n^{\ast})$ and assuming that $\left(x_1^\ast, \cdots, x_j^\ast+\epsilon(x_j^\ast -x_j^\dag),\cdots,x_n^\ast\right)$ is a feasible point, we have:
\begin{align}\label{theorem3-opt4}
f(\mathbf{x}^\ast)\leq f\left(x_1^\ast, \cdots, x_j^\ast+\epsilon(x_j^\ast -x_j^\dag),\cdots,x_n^\ast\right).
\end{align}

Thus, from \eqref{theorem3-opt3} and \eqref{theorem3-opt4}, it implies that $\epsilon\frac{\Delta f}{n}+ \beta\geq0$. However, since $\Delta f<0$, $\beta$ contains higher powers of $\epsilon$, and $\epsilon$ is an arbitrary constant that can be chosen such that $\epsilon\frac{\Delta f}{n}+\beta<0$, a contradiction is reached. Thus, it must be that $(x_1^{\dag},\cdots,x_n^{\dag})=(x_1^{*},\cdots,x_n^{*})$, thereby completing the proof.
\end{proof}

\subsection{Non-Strongly Convex Cost Functions}\label{sec:Sampling}
When the cost function $f(\cdot)$ is convex\textemdash but not strongly convex\textemdash with respect to the individual variables $x_i$, the solution to the optimization problem \eqref{eq:Iterationk} is not guaranteed to be unique. Consequently, the proposed block-alternating iterative optimization method may converge to a local minimum. To enhance the likelihood of reaching the global minimum, and drawing inspiration from the literature on genetic algorithms (GA) \cite{alhijawi2024genetic} and particle swarm optimization (PSO) \cite{kennedy1995PSO}, we propose initializing Algorithm~\ref{alg:algorithm} from multiple starting points. These independent runs can be executed in parallel, which helps reduce the overall computational cost.

Existing methods include Monte Carlo sampling \cite{hastings1970MC_sampling}, Latin Hypercube Sampling (LHS) \cite{mckay2000LHS}, {grid sampling \cite{WANG20121sampling}}, and {knowledge-based sampling \cite{WANG20121sampling}}. While Monte Carlo and grid sampling are straightforward to implement, they often become impractical for high-dimensional optimization problems. In contrast, LHS and knowledge-based sampling provide greater diversity, but their implementation is generally more computationally demanding \cite{shan2010survey-sampling}.

To overcome the limitations of existing sampling methods, we propose a hybrid strategy that integrates grid sampling with LHS. Initially, we define the number of points, $N$, to initialize the algorithm. The feasible domain is discretized into equal intervals for each variable, forming a candidate set. From this set\footnote{We use $round(N/2)$ to denote the nearest integer to $N/2$.}, $round(N/2)$ points are selected based on their low cost function values. Unlike conventional grid sampling, the discretization step can be relatively large, reducing computational burden. The remaining $N-round(N/2)$ points are generated using LHS. This hybrid approach ensures that a portion of the initial points is concentrated in promising low-cost regions, while the LHS-generated points maintain diversity and mitigate sampling bias. 

Figure~\ref{fig:sampling} illustrates the grid sampling, LHS, and the proposed hybrid sampling method applied to a representative objective function. Importantly, when the objective function features a global minimum within a sharp and narrow basin, combining these two methods increases the likelihood of sampling points near this region, thereby outperforming pure grid sampling or LHS in such scenarios.

\begin{figure}[!t]
    \centering
    \includegraphics[width=\columnwidth]{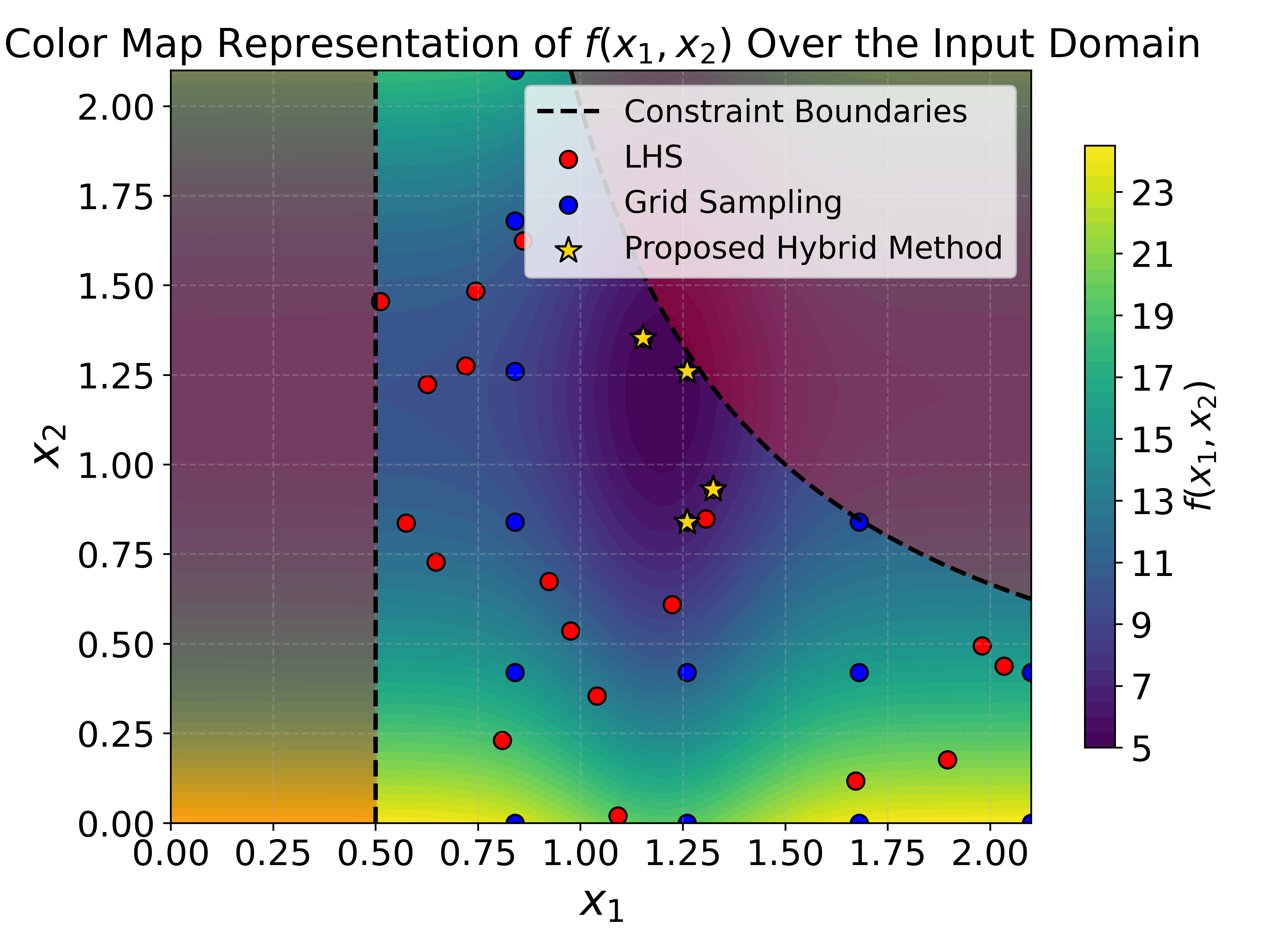}
    \caption{Geometric illustration of grid sampling, LHS, and the proposed hybrid sampling method for selecting $N=4$ initial points. {The cost function is $f(x_1,x_2)=-5e^{\frac{-(x_1-1.2)^2}{0.08}}+0.02(x_1-1.2)^4+0.02(x_1-1.2)^2+10(x_2-1.2)^2+10$, and the constraints are $x_1\geq0.5$ and $x_2\leq\frac{1}{x_1-0.5}$.}}
    \label{fig:sampling}
\end{figure}

\section{Experimental Studies}\label{sec:Experimental Study}
This section evaluates the effectiveness of the proposed optimization method in solving problems of the form \eqref{MainProblem1}. We begin with a numerical example to compare the proposed method against existing algorithms. Then, we demonstrate its application in a temperature control problem.

\subsection{Numerical Example}
Consider the following optimization problem:
\begin{subequations}\label{eq:eg.1}
\begin{align}
\min\;\;(x_1 - x_2)^2 + (\frac{1}{x_2}+2)^2+0.5x_3^2
\end{align} subject to 
\begin{align}
&0\leq x_1\leq3\\
&2\leq x_2\leq10\\
& -1\leq x_3\leq1\\
&x_1+x_2\leq 5\\
&x_1 x_3 \geq 2
\end{align}   
\end{subequations}
where the cost function and the constraints satisfy the conditions outlined in \eqref{MainProblem1}. We initialize the algorithm with 32 starting points, selected using the hybrid sampling strategy described in Subsection \ref{sec:Sampling}. As shown in Figure \ref{fig:cost plot}, the block-alternating iterative optimization method converges to the optimal solution with 6 iterations.


For comparison purposes, we evaluated the proposed method against PSO and GA approaches, using the same set of initial points to solve the optimization problem defined in \eqref{eq:eg.1}. Both the PSO and GA methods were run for a maximum of 200 iterations. All algorithms were executed in the same computing environment: a 12th Gen Intel\textsuperscript{\textregistered} Core\textsuperscript{\texttrademark} i5-1230U processor with 8 GB of RAM. The implementations were developed using \texttt{Python 3.10}.

Comparison results are presented in Figure \ref{fig:compare GA PSO}. The top figure shows the number of initial points required by each method to reach the optimal solution, while the bottom figure illustrates the corresponding computation times. As shown in the top figure, the proposed block-alternating iterative optimization method achieves convergence with fewer than 8 initial points, compared to approximately 8--16 for PSO and 256--512 for GA. The bottom figure demonstrates that while computation time increases with the number of initial points for all methods, the proposed block-alternating iterative optimization consistently requires the least amount of time.





\begin{figure}
    \centering
    \includegraphics[width=0.8\linewidth]{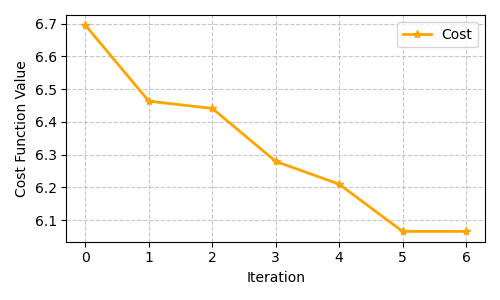}
    \caption{Convergence of the cost function over iterations with the block-alternating iterative optimization method. }
    \label{fig:cost plot}
\end{figure}

\begin{figure}
    \centering
    \includegraphics[width=1\linewidth]{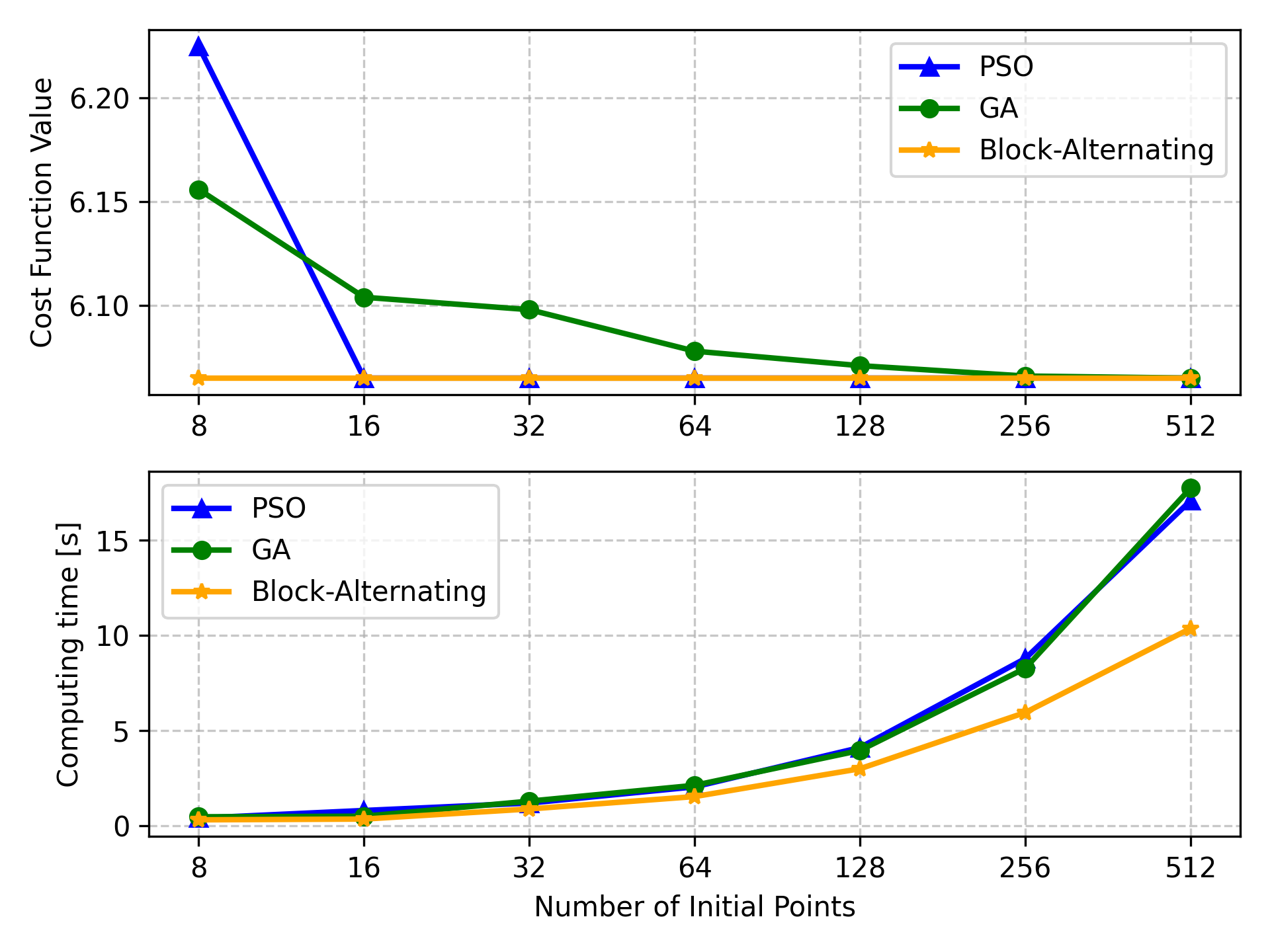}
    \caption{Comparison of optimization performance and computation time for the proposed block-alternating iterative method, PSO, and GA.}
    \label{fig:compare GA PSO}
\end{figure}


\subsection{Thermal Control System}
This subsection applies the proposed block-alternating iterative method to regulate the temperature of the system depicted in Fig. \ref{fig:heatsystem}, using the one-step-ahead predictive control approach described in \cite{Our-one-step-ahead,Li2025ACC}.


Utilizing the energy balance method, the dynamical model of this system is \cite{Oliveira2023}:
\begin{align}\label{eq:Heater}
mc_p\frac{dT}{dt}=UA(T_{amb.}-T)+\varepsilon\sigma A\big(T_{amb.}^4-T^4)+\alpha Q, 
\end{align}
where $m=4$ [g] is the mass, $c_p=500$ [J/Kg$\cdot$K] is the heat capacity, $T$ is the temperature expressed in [K], $U=10$ [W/m$^2\cdot$K] is the heat transfer coefficient, $A=12\times10^{-4}$ [m$^2$] is the area, $T_{amb.}$  is the ambient temperature expressed in [K], $\varepsilon=0.9$ is the emissivity, $\sigma=5.67\times10^{-8}$ [W/m$^2\cdot$K$^2$] is the Stefan-Boltzmann constant, $Q$ is the percentage heater output, and $\alpha=0.01$ is a factor that relates heater output to power dissipated by the transistor. 
 
As shown in \cite{Oliveira2020}, the system described by \eqref{eq:Heater} can be approximated by a first-order linear system with an added delay. To identify this model, we employ the open-loop step response method proposed in \cite{Oliveira2020_2} and discretize the identified model by using a sampling period of 0.5 second. The resulting system is:
\begin{align}\label{eq:thermal-ss}
x(t+1)=&\left[\begin{array}{cc}
0 & -0.0005 \\
1 & -0.0965 
\end{array}\right] x(t)+\left[\begin{array}{c}
0.0004 \\
-0.00
\end{array}\right] u(t), \nonumber\\
y(t)=&\left[\begin{array}{ll}
0 & 1
\end{array}\right] x(t)+T_{amb.},
\end{align}
where $x_1$ represents an internal state, and $x_2$ corresponds to the temperature in Celsius. A Luenberger observer is designed with the gain {$L=[0.85~0.9]^\top$} to estimate $x_1$.

\begin{figure}[!t]
    \centering
    \includegraphics[width=5.5cm]{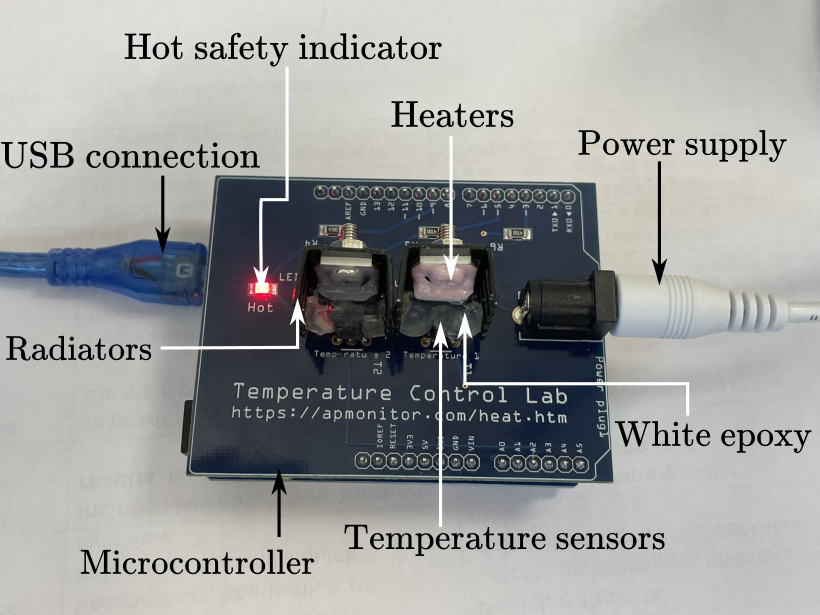}
    \caption{Experimental setup for temperature control.}
    \label{fig:heatsystem}
\end{figure}


The one-step-ahead predictive control framework \cite{Our-one-step-ahead,Li2025ACC} computes the control input $u(t)$, together with a Lyapunov matrix $P(t)$, at each time step $t$ by solving the following optimization problem:
\begin{subequations}\label{eq:OptimizationProblemMain}
\begin{align}\label{eq:CostFunction}
u^\ast(t),P^\ast(t)=\arg\,\min_{u,P}\,&\left\Vert x^+-\bar{x}_r\right\Vert^2_{Q_x}+\left\Vert u-\bar{u}_r\right\Vert^2_{Q_u}\nonumber\\
&+V\left(x(t),r,P\right),
\end{align}
subject to
\begin{align}
P\succ 0, \\
V(x^+,r,P)-V\left(x(t),r,P\right)<-\theta \Vert x(t)-\bar{x}_r\Vert_P,\label{eq:Constraint2}
\end{align}
\end{subequations}
where $V(x,r,P):=\left\Vert x-\bar{x}_r\right\Vert_{P}$ is the Lyapunov function with $P\succ0$ ($P\in\mathbb{R}^{2\times 2}$) being the Lyapunov matrix, $x^+$ is the next step based on the linearized model given in \eqref{eq:thermal-ss}, $Q_x=Q_x^{\top} \succeq 0$ ($Q_x \in \mathbb{R}^{2 \times 2}$) and $Q_u>0$ are weighting matrices, and $\theta>0$ is the contraction parameter.

The cost function and constraints of the optimization problem \eqref{eq:OptimizationProblemMain} satisfy the conditions specified in \eqref{MainProblem1}. Accordingly, we apply the proposed block-alternating iterative optimization method to this problem at each time step $t$, thereby regulating the system temperature.


Given $r=50$ Celsius and assuming that {$Q_x=\text{diag}\left\{0.1,10\right\}$ and $Q_u=0.001$}, Fig. \ref{fig:heater} presents the experimental results (estimated $\hat{x}_1(t)$, output $y(t)$, and control input $u(t)$), including a comparison with the iterative LQR reported in \cite{Prasad2014}. To guarantee that the optimization problem \eqref{eq:OptimizationProblemMain} can be solved within 0.5 seconds (i.e., the sampling period), the number of initial starting points in the proposed block-alternating iterative optimization method is set to 16.

As shown in Fig. \ref{fig:heater}, both methods successfully drive the system states to the desired equilibrium point. However, the one-step-ahead predictive control method, implemented via the block-alternating iterative approach, produces a smoother and less oscillatory control input compared to the LQR method. This improvement arises because the one-step-ahead predictive control relies on a single-step prediction, which substantially reduces the influence of linearization errors on closed-loop performance. Furthermore, Fig. \ref{fig:heater} demonstrates the effectiveness of the proposed block-alternating iterative optimization method in solving the non-convex optimization problem in \eqref{eq:OptimizationProblemMain}. 


\begin{figure}[!t]
    \centering
    \includegraphics[width=6.5cm]{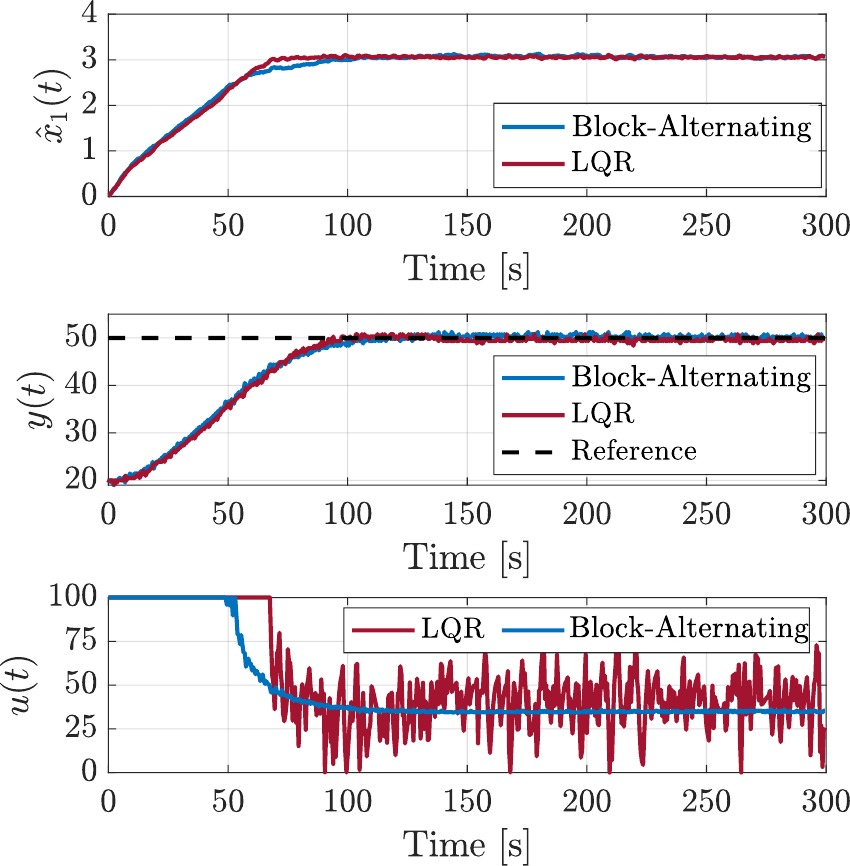}
    \caption{Experimental results comparing the temperature control performance of the one-step-ahead predictive control method, implemented via the block-alternating iterative approach, with that of the LQR method.}
    \label{fig:heater}
\end{figure}

\section{Python Platform}\label{sec:Python Platform}

To enable other researchers to apply the proposed optimization method in their studies and applications, this section introduces a Python platform that is readily accessible. The platform is publicly available at {\cite{Optimization-GUI}}.

The layout of the platform is shown in Fig. \ref{fig:platform}. First, the user specifies the number of decision variables and the initial points to initialize the algorithm. If the number of initial points is one, the user may manually set its value. Next, the user defines the search domain by setting the minimum and maximum allowable values for each decision variable, along with the cost function and constraints. To facilitate ease of use, the input format follows standard Python mathematical syntax, and a syntax-error window is automatically triggered if any invalid expressions are detected.


To enhance flexibility and enable method comparison, the platform allows the user to select the optimization algorithm. The available options include the proposed block-alternating iterative method, as well as Genetic Algorithm (GA) and Particle Swarm Optimization (PSO). 



The platform also includes a results display window that presents the solution outcomes. If the optimization problem is infeasible or ill-posed (e.g., when the objective is non-convex in all variables), the platform provides diagnostic feedback. When the problem is successfully solved, the platform visualizes the cost function values and iteration history at the bottom of the interface. Users can also export these plots using the `Export plot' button, which saves the data points in JSON format for further analysis.

\begin{figure}
    \centering
    \includegraphics[width=\columnwidth]{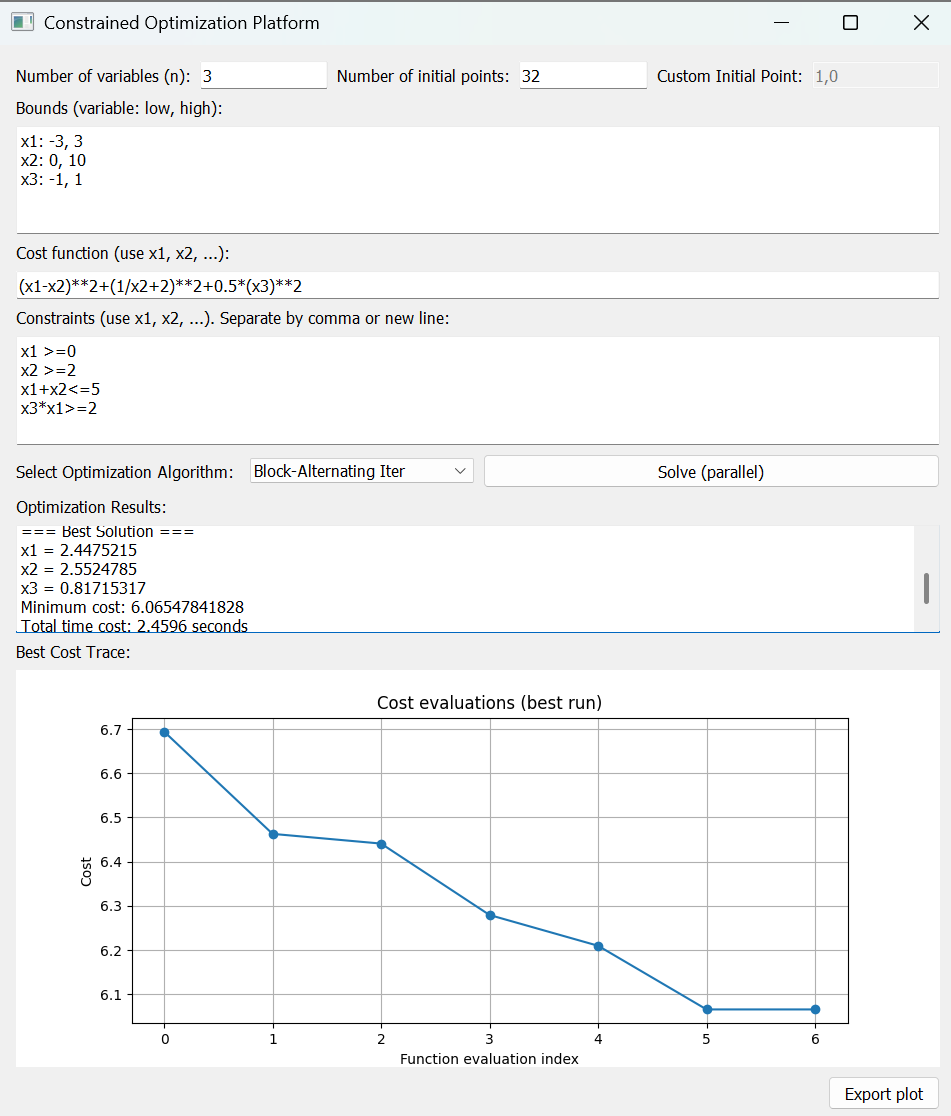}
    \caption{Layout of the developed Python platform, illustrating the setup of the optimization problem in \eqref{eq:eg.1}. The platform is publicly available at \cite{Optimization-GUI}.}
    \label{fig:platform}
\end{figure}

\section{Conclusion}\label{sec:Conclusion}
Constrained non-convex optimization problems frequently arise in engineering and scientific applications. Conventional solvers often incur high computational costs and are prone to convergence to local minima. To address these challenges, this paper developed a block-alternating iterative approach for solving such optimization problems. The method decomposes the original problem into variable-specific subproblems, which are solved iteratively until convergence to an optimal solution. Theoretical properties of the proposed method were rigorously established. The method was validated and evaluated through a numerical example and a control engineering application. Moreover, a Python platform providing end-to-end functionality was developed and made publicly available to facilitate its use by other researchers.



\bibliography{references} 

@misc{Optimization-GUI,
howpublished = "\url{https://github.com/anran-github/Block-Alternating-Iterative-Optimization-GUI}",
note = "[Accessed \today]"
}

@article{CD-Original-SJ,
  title={Coordinate descent algorithms},
  author={Wright, Stephen J},
  journal={Mathematical programming},
  volume={151},
  number={1},
  pages={3--34},
  year={2015},
  publisher={Springer}
}

@article{glowinski2014alternating,
  title={On alternating direction methods of multipliers: a historical perspective},
  author={Glowinski, Roland},
  journal={Modeling, simulation and optimization for science and technology},
  pages={59--82},
  year={2014},
  publisher={Springer}
}

@article{shi2016primer,
  title={A primer on coordinate descent algorithms},
  author={Shi, Hao-Jun Michael and Tu, Shenyinying and Xu, Yangyang and Yin, Wotao},
  journal={arXiv preprint arXiv:1610.00040},
  year={2016}
}

@book{Boyd2004,
  author    = {Stephen Boyd and Lieven Vandenberghe}, 
  title     = {Convex Optimization},
  publisher = {Cambridge University Press},
  year      = {2004},
}

@inproceedings{shah2021rapid,
  title={Rapid convex optimization of centroidal dynamics using block coordinate descent},
  author={Shah, Paarth and Meduri, Avadesh and Merkt, Wolfgang and Khadiv, Majid and Havoutis, Ioannis and Righetti, Ludovic},
  booktitle={2021 IEEE/RSJ International Conference on Intelligent Robots and Systems (IROS)},
  pages={1658--1665},
  year={2021},
  organization={IEEE}
}

@article{zeng2020coordinate,
  title={Coordinate descent algorithms for phase retrieval},
  author={Zeng, Wen-Jun and So, Hing-Cheung},
  journal={Signal Processing},
  volume={169},
  pages={107418},
  year={2020},
  publisher={Elsevier}
}

@article{Our-one-step-ahead,
title = {Provably-stable neural network-based control of nonlinear systems},
journal = {Engineering Applications of Artificial Intelligence},
volume = {138},
pages = {109252},
year = {2024},
issn = {0952-1976},
author = {Anran Li and John P. Swensen and Mehdi Hosseinzadeh},
}

@inproceedings{Li2025ACC,
  Author =   "Anran Li and John P. Swensen and Mehdi Hosseinzadeh",
  Title =    "A Guaranteed-Stable Neural Network Approach for Optimal Control of Nonlinear Systems",
  booktitle =    "Proc. American Control Conference",
  address	= "Denver, CO, USA",
  month		= "Jul. 8--10,",
  pages =    "",
  year =     "2025",
}

@article{shami2023velocity,
  title={Velocity pausing particle swarm optimization: A novel variant for global optimization},
  author={Shami, Tareq M and Mirjalili, Seyedali and Al-Eryani, Yasser and Daoudi, Khadija and Izadi, Saadat and Abualigah, Laith},
  journal={Neural Computing and Applications},
  volume={35},
  number={12},
  pages={9193--9223},
  year={2023},
  publisher={Springer}
}

@article{Chen2024,
    author    = "Jie Chen and Yongming Liu",
    title     = "Neural optimization machine: a neural network approach for optimization and its application in additive manufacturing with physics-guided learning",
    journal   = "Philosophical Transactions of the Royal Society A",
    volume   = "381",
    number   = "2260",
    pages    = "",
    year      = "2024",
    month    = "",
}

@article{alhijawi2024genetic,
  title={Genetic algorithms: Theory, genetic operators, solutions, and applications},
  author={Alhijawi, Bushra and Awajan, Arafat},
  journal={Evolutionary Intelligence},
  volume={17},
  number={3},
  pages={1245--1256},
  year={2024},
  publisher={Springer}
}

@article{rodomanov2020randomized,
  title={A randomized coordinate descent method with volume sampling},
  author={Rodomanov, Anton and Kropotov, Dmitry},
  journal={SIAM Journal on Optimization},
  volume={30},
  number={3},
  pages={1878--1904},
  year={2020},
  publisher={SIAM}
}

@inproceedings{xu2020second,
  title={Second-order optimization for non-convex machine learning: An empirical study},
  author={Xu, Peng and Roosta, Fred and Mahoney, Michael W},
  booktitle={Proceedings of the 2020 SIAM International Conference on Data Mining},
  pages={199--207},
  year={2020},
  organization={SIAM}
}

@article{diamond2018general,
  title={A general system for heuristic minimization of convex functions over non-convex sets},
  author={Diamond, Steven and Takapoui, Reza and Boyd, Stephen},
  journal={Optimization Methods and Software},
  volume={33},
  number={1},
  pages={165--193},
  year={2018},
  publisher={Taylor \& Francis}
}

@article{lai2019adaptive,
  title={An adaptive parallel particle swarm optimization for numerical optimization problems},
  author={Lai, Xinsheng and Zhou, Yuren},
  journal={Neural Computing and Applications},
  volume={31},
  pages={6449--6467},
  year={2019},
  publisher={Springer}
}

@article{kungurtsev2014sequential,
  title={Sequential quadratic programming methods for parametric nonlinear optimization},
  author={Kungurtsev, Vyacheslav and Diehl, Moritz},
  journal={Computational Optimization and Applications},
  volume={59},
  pages={475--509},
  year={2014},
  publisher={Springer}
}

@article{nocedal2014interior,
  title={An interior point method for nonlinear programming with infeasibility detection capabilities},
  author={Nocedal, Jorge and {\"O}ztoprak, Figen and Waltz, Richard A},
  journal={Optimization Methods and Software},
  volume={29},
  number={4},
  pages={837--854},
  year={2014},
  publisher={Taylor \& Francis}
}

@inproceedings{parra2021rotation,
  title={Rotation coordinate descent for fast globally optimal rotation averaging},
  author={Parra, Alvaro and Chng, Shin-Fang and Chin, Tat-Jun and Eriksson, Anders and Reid, Ian},
  booktitle={Proceedings of the IEEE/CVF Conference on Computer Vision and Pattern Recognition},
  pages={4298--4307},
  year={2021}
}

@inproceedings{yuan2023coordinate,
  title={Coordinate descent methods for {DC} minimization: Optimality conditions and global convergence},
  author={Yuan, Ganzhao},
  booktitle={Proceedings of the AAAI Conference on Artificial Intelligence},
  volume={37},
  number={9},
  pages={11034--11042},
  year={2023}
}

@inproceedings{Oliveira2023,
  title={Temperature Control Laboratory {(TCLab)}: Demonstration of Use in Portugal},
  author={Paulo Moura Oliveira and Alberto Cardoso and Filomena Soares and Jose Machado and Joao {Sá} and Helena Lopes},
  booktitle={Proceedings of the 6th Experiment@ International Conference},
  year={2023},
  address={{Évora}, Portugal},
   month = {Jun. 5--7,},
   pages={13--14},
}

@article{Oliveira2020,
  title={Swarm-Based Design of Proportional Integral and Derivative Controllers Using a Compromise Cost Function: An Arduino Temperature Laboratory Case Study},
  author={P. B. de Moura Oliveira and John D. Hedengren and E. J. Solteiro Pires},
  journal={algorithms},
  volume={13},
  number={2},
  pages={315},
  year={2020},
  month={}
}

@article{Oliveira2020_2,
  title={Introducing Digital Controllers to Undergraduate Students using the {TCLab} Arduino Kit},
  author={P. B. de Moura Oliveira and John D. Hedengren and J. A. Rossiter},
  journal={IFAC-PapersOnLine},
  volume={53},
  number={2},
  pages={17524--17529},
  year={2020},
  month={}
}

@article{Prasad2014,
  title={Optimal Control of Nonlinear Inverted Pendulum System Using {PID} Controller and {LQR}: Performance Analysis Without and With Disturbance Input},
  author={Lal Bahadur Prasad and Barjeev Tyagi and Hari Om Gupta},
  journal={International Journal of Automation and Computing},
  volume={11},
  number={},
  pages={661--670},
  year={2014},
  month={}
}

@article{gu2020coordinate,
  title={Coordinate descent optimization for winged-{UAV} design},
  author={Gu, Haowei and Lyu, Ximin and Li, Zexiang and Zhang, Fu},
  journal={Journal of Intelligent \& Robotic Systems},
  volume={97},
  number={1},
  pages={109--124},
  year={2020},
  publisher={Springer}
}

@article{hastings1970MC_sampling,
  title={Monte Carlo sampling methods using Markov chains and their applications},
  author={Hastings, W Keith},
  year={1970},
  publisher={Oxford University Press}
}

@article{mckay2000LHS,
  title={A comparison of three methods for selecting values of input variables in the analysis of output from a computer code},
  author={McKay, Michael D and Beckman, Richard J and Conover, William J},
  journal={Technometrics},
  volume={42},
  number={1},
  pages={55--61},
  year={2000},
  publisher={Taylor \& Francis}
}

@inproceedings{kennedy1995PSO,
  title={Particle swarm optimization},
  author={Kennedy, James and Eberhart, Russell},
  booktitle={Proceedings of ICNN'95-international conference on neural networks},
  volume={4},
  pages={1942--1948},
  year={1995},
  organization={ieee}
}

@article{shan2010survey-sampling,
  title={Survey of modeling and optimization strategies to solve high-dimensional design problems with computationally-expensive black-box functions},
  author={Shan, Songqing and Wang, G Gary},
  journal={Structural and multidisciplinary optimization},
  volume={41},
  number={2},
  pages={219--241},
  year={2010},
  publisher={Springer}
}

@article{WANG20121sampling,
title = {A review of spatial sampling},
journal = {Spatial Statistics},
volume = {2},
pages = {1-14},
year = {2012},
issn = {2211-6753},
author = {Jin-Feng Wang and A. Stein and Bin-Bo Gao and Yong Ge},
keywords = {Design-based, Model-based, Population, Superpopulation, Bias},
}
\bibliographystyle{ieeeconf}

\appendices

\section{Uniqueness of the Tuple $(x_1^{\dag},\cdots,x_n^{\dag})$}\label{Appendix:Uniqueness}

Suppose that the cost function $f(\cdot)$ is strongly convex and the feasible set $\{x_1,\cdots,x_n:\; g_j(x_1,\cdots,x_n)\leq0,~\forall j\}$ is convex, both with respect to each individual variable $x_i$ when all other variables are fixed. Let $\underline{J}$ denote the limiting value of the cost function $f(\cdot)$ as $k \to \infty$ (see Corollary \ref{cor:corollary1}). Suppose $x_i^\dag$ is the solution of the optimization problem \eqref{eq:Iterationk} in the limit $k \to \infty$. This implies that $f(x_1^{(k-1)}, \dots, x_i^\dag, x_{i+1}^{(k-1)}, \dots, x_n^{(k-1)}) = \underline{J}$.

Now consider iteration $k+1$, where we optimize over the decision variable $x_{i+1}$, and let the corresponding optimal solution be $x_{i+1}^\dag$. The uniqueness claim implies that $x_{i+1}^\dag=x_{i+1}^{(k-1)}$. Suppose instead that this equality does not hold.

Since $x_{i+1}^{(k-1)}$ is a feasible solution for the optimization problem at iteration $k+1$, the optimality of $x_{i+1}^\dag$ ensures that either $f(x_1^{(k-1)}, \dots, x_i^\dag, x_{i+1}^\dag, \dots, x_n^{(k-1)}) < f(x_1^{(k-1)}, \dots, x_i^\dag, x_{i+1}^{(k-1)}, \dots, x_n^{(k-1)})$ (Condition I), or $f(x_1^{(k-1)}, \dots, x_i^\dag, x_{i+1}^\dag, \dots, x_n^{(k-1)}) = f(x_1^{(k-1)}, \dots, x_i^\dag, x_{i+1}^{(k-1)}, \dots, x_n^{(k-1)})$ (Condition II).

Condition I implies that $f(x_1^{(k-1)}, \dots, x_i^\dag, x_{i+1}^\dag, \dots, x_n^{(k-1)}) < \underline{J}$, which contradicts the convergence property established earlier. If Condition II holds, then the cost $\underline{J}$ is attained at two distinct points, contradicting the fact that a strongly convex cost function over a convex feasible set admits a unique minimizer.

Therefore, when $f(\cdot)$ is strongly convex in each variable $x_i$, the proposed algorithm converges to a unique tuple $(x_1^{\dag}, \dots, x_n^{\dag})$ as $k\rightarrow\infty$.


\section{Examination of Counterexamples Generated by ChatGPT and Copilot}\label{Appendix:ChatGPT}
Despite the discussion provided in Appendix \ref{Appendix:Uniqueness}, we attempted to use ChatGPT and Copilot to generate potential counterexamples to the uniqueness claim. While both platforms produced examples they claimed were valid counterexamples, upon closer examination we found that none of them satisfied the setting described after Equation \eqref{MainProblem1}.

The first two examples were the unconstrained optimization problems $\min\limits_{x_1,x_2}~(x_1 - x_2)^2$ and $\min\limits_{x_1,x_2}~ x_1^2+x_2^2 - 3 x_1 x_2+0.1(x_1^2+x_2^2)^2$, which indeed do not have a unique solution and therefore fall outside the scope of our setting. The third example was $\min\limits_{x_1,x_2}~(x_1 - 1)^2 + (x_2 + 1)^2$ subject to $x_1 x_2 + 0.5 \leq 0$ and $(x_1 - 1)(x_2 + 1) - 0.5 \leq 0$; in this case, the feasible set is not connected, which again violates the assumptions in our setting.  Another example was $\min\limits_{x_1,x_2}~x_1^2 + x_2^2$ subject to $x_1 = x_2$, but this introduces equality constraints, which are excluded from our formulation.

In summary, although ChatGPT and Copilot generated candidate counterexamples, none of them are valid under the assumptions of our problem setting. This further supports the uniqueness claim established in Appendix \ref{Appendix:Uniqueness}.


\end{document}